\newcommand{\tsk}[1]{\textcolor{YellowOrange}}
\def\@endtheorem{\endtrivlist}% NEW
\newtheorem{teo}{Theorem}[section]
\newtheorem{defin}[teo]{Definition}
\newtheorem{prop}[teo]{Proposition}
\newtheorem{lemma}[teo]{Lemma}
\theoremstyle{definition}
\newtheoremstyle{dico}% name of the style to be used
 {\baselineskip}   % ABOVESPACE
  {\topsep}   % BELOWSPACE
  {}  % BODYFONT
  {0pt}       % INDENT (empty value is the same as 0pt)
  {} % HEADFONT
  {.}         % HEADPUNCT
  {5pt plus 1pt minus 1pt} % HEADSPACE
  {}          % CUSTOM-HEAD-SPEC
\theoremstyle{dico}
\numberwithin{equation}{section}
\newcommand{\C}{\mathbb{C}}
\newcommand{\Z}{{\mathbb{Z}}}
\newcommand{\Pic}{\operatorname{Pic}}
\newcommand{\Spec}{\operatorname{\textit{Spec} }}
\newcommand{\Alb}{\operatorname{Alb}}
\renewcommand{\phi}{\varphi}
\renewcommand{\phi}             {\varphi}
 \renewcommand{\Im}              {\operatorname{Im}}
\begin{document}

\author{Paola Porru, Sammy Alaoui Soulimani}

\title[Divisors of  $\mathcal{A}^{(1,1,2,2)}_4$]{Divisors of  $\mathcal{A}^{(1,1,2,2)}_4$}

\address{Universit\`a di Pavia \newline Dipartimento di Matematica \newline Via Ferrata 1 \newline 27100 Pavia}
\email{paola.porru01@ateneopv.it} 
\address{Universitetet i Stavanger \newline institutt for matematikk og naturvitenskap \newline Kjell Arholmsgate 41 \newline 4036 Stavanger} \email{sammy.a.soulimani@uis.no}

%\thanks{The first  author was partially supported by.. The second autor was partially
%  supported by PRIN 2010 MIUR ``Geometria delle Variet\`a Algebriche" and partially supported by INdAM (GNSAGA)
%}

\maketitle

\begin{abstract}
We construct two divisors in the moduli space $\mathcal{A}_4 ^{(1,1,2,2)}$ and we check their invariance and non-invariance under the canonical involution introduced by Birkenhake and Lange~\cite{birkenhake2003isomorphism}.
 \end{abstract}

\setcounter{tocdepth}{1}
\tableofcontents{}
% Create a new 1st level heading

\section{Introduction}

The purpose of this paper is to study the geometry of the moduli space $\mathcal{A}_4^{(1,1,2,2)}$, parametrizing isomorphism classes of $4$-dimensional abelian varieties with polarization of type $(1,1,2,2)$. More precisely, our aim is to study its Picard group $\Pic(\mathcal{A}_4^{(1,1,2,2)})$ in order to get information about its Kodaira dimension.

%in fact that every variety of general type $X$ has $\kappa (X) = \dim (X)$, maximal, and 
%:= \dim \bigoplus _i H^0(X, K_{X}^{\otimes i})
In general, the problem of computing the Kodaira dimension of the moduli spaces $\mathcal{A}_g^{(d_1, \dots d_g)}$ has been a topic of intense study in the last years. Since the direct calculation of the Kodaira dimension $\kappa (X)$ of a variety $X$ (defined as the maximum of the dimension of $\varphi_{mK_X}(X)$\footnote{$\varphi_{mK_X}$ is the rational map from $X$ to the projective space associated with the linear system $|mK_X|$.} for $m \geq 1$, and $-\infty$ if $|mK_X|=\emptyset$ for all $m$) is often very hard to perform, the majority of results about $\kappa(\mathcal{A}_g^{(d_1, \dots d_g)})$ have been obtained as a consequence of generality and rationality properties: it is well known for instance that every unirational variety $X$ (i.e. that admits a rational dominant map $\mathbb{P} \dashrightarrow X$) has $\kappa (X) = -\infty$.

%,~\cite{mori1983uniruledness},~\cite{verra1984short}
The case $\mathcal{A}_g := \mathcal{A}_g^{(1, \dots 1)}$ of principally polarized abelian varieties, has been almost solved: it has been shown that the moduli space $\mathcal{A}_g$ is unirational if $g \leq 5$, that implies that its Kodaira dimension $\kappa(\mathcal{A}_g)= - \infty$ (see ~\cite{clemens1983double},~\cite{donagi1984unirationality}). Is has also been shown that the moduli spaces $\mathcal{A}_g$ are of general type for $g \geq 7$, so their Kodaira dimension turns out to be maximal (see~\cite{mumford1983kodaira},~\cite{tai1982kodaira}). The only unsolved case is $\mathcal{A}_6$, whose Kodaira dimension is yet unknown. 

Less is known about the Kodaira dimension of the case of non-principally polarized abelian varieties. Concerning abelian surfaces, Hulek and Sankaran have shown that the compactification of the moduli space of abelian surfaces with a $(1,p)$-polarization and a level structure $\bar{\mathcal{A}}_p$ ($p$ a prime) is of general type for $p \geq 41$ ~\cite{hulek1994surfaces} . We also recall the result of Tai, who proved that the moduli space $\mathcal{A}_g^{(d_1, \dots d_g)}$ is of general type when $g \geq 16$ for every choice of the polarization, and when $g \geq 8$ but only for certain polarizations~\cite{tai1982kodaira}.
The only result about unirationality of such moduli spaces is due to Bardelli, Ciliberto and Verra~\cite{bardelli1995curves}, who proved that $\mathcal{A}_4^{(1,2,2,2)}$ is unirational. Moreover, since this space is isomorphic to $\mathcal{A}_4^{(1,1,1,2)}$ (see Birkenhake and Lange~\cite{birkenhake2003isomorphism}), this also implies the unirationality of $\mathcal{A}_4^{(1,1,1,2)}$. Nevertheless nothing is known about neither the unirationality of $\mathcal{A}_4^{(1,1,2,2)}$ nor its Kodaira dimension.

In this paper, in order to better understand the geometry of $\mathcal{A}_4^{(1,1,2,2)}$, we try to get more information on its Picard group. In Section \ref{construction} we construct explicit divisors of that moduli space following two different approaches: the first divisor is constructed as the image of the Prym map $P: \mathcal{R}_{2,6} \rightarrow \mathcal{A}_4 ^{(1,1,2,2)}$, sending a cover $\pi : D \rightarrow C$ in $\mathcal{R}_{2,6}$ to its Prym variety. The second divisor is constructed from the map $\tilde{\mathcal{A}_4} \rightarrow \mathcal{A}_4^{(1,1,2,2)}$, sending a principally polarized abelian variety $X$ of dimension $4$ together with a fixed totally isotropic order $4$ subgroup $H$ of $2$-torsion elements to the quotient $X/H$, and then considering the image of the Jacobian locus by this map (see section \ref{construction} for a definition of $\tilde{\mathcal{A}_4}$).

In Section \ref{SectProof}, to get more informations about these divisors, we check if they are invariant under the natural involution defined on the moduli space $\mathcal{A}_4^{(1,1,2,2)}$ by Birkenhake and Lange, sending a polarized abelian variety $(A, L_A)$ to its dual $(A^{\vee}, L_A^{\vee})$ (see~\cite{birkenhake2003isomorphism}). We almost immediately obtain that the divisor constructed with the Prym procedure is fixed by the involution, by using the result of Pantazis stating that two bigonally related covers have dual Prym varieties (see~\cite{pantazis1986prym}). On the other hand, with a bit more work, we obtain that the second divisor is not invariant under the involution: the clue here is a Theorem due to Bardelli and Pirola, stating that if there exists an isogeny between two Jacobians $JC$ and $JC'$ ($JC$ generic, with dimension at least $4$), then the two Jacobians have to be isomorphic, and the isogeny  is the multiplication by an integer (see~\cite{bardelli1989curves}). Since the involution does not preserve this divisor, we get a very explicit description of a different divisor in $\mathcal{A}_4^{(1,1,2,2)}$, obtained by duality.

\subsection*{Acknowledgements}
First and foremost, we thank professor J.C. Naranjo for suggesting this very interesting problem and for giving us proper guidance and advice. We also thank V. Gonz\'{a}lez-Alonso for the useful conversations we have had. And finally we thank professors A. Ragusa, F. Russo and G. Zappal\`{a}, the organizers of the summer school Pragmatic 2016, for providing us with the warm and rich environment at the university of Catania, which not only led to the birth of this modest work, but also made us know some amazing and remarkable people.

\section{Notation and preliminaries}\label{preliminaries}

We work over the field $\mathbb{C}$ of complex numbers. We start by stating some well known results about complex polarized abelian varieties and Pryms, then we recall the main ideas of the bigonal construction, which will be used in the next section. Our main reference for this preliminary section is Birkenhake and Lange's book~\cite{birkenhake2013complex}.

\subsection{Polarized abelian varieties}\label{modspace}

Let $(A,H)$ be a polarized abelian variety. Fix a line bundle $L \in \Pic(A) $ satisfying $c_1(L)=H$. The morphism $\lambda_L : A \longrightarrow A^\vee$ given by $a \mapsto \tau^*_a L \otimes L^{-1}$ is an isogeny. Here, $A^\vee=\Pic^0(A)$ is the dual of $A$, and $\tau^*_a$ is the translation by $a$ in $A$. We get the following result describing the kernel $K(L)$ of $\lambda_L$:
\begin{teo}
	If $L$ is a polarization of type $(d_1, \ldots, d_g)$ with $d_i \mid d_{i+1}$ for all $i=1, \ldots, g$, then 
	$$K(L)\cong \big( \Z/d_1\Z \times \ldots \times \Z/d_g\Z \big)^2.$$
\end{teo}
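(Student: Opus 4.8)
The plan is to analyze the kernel $K(L)$ via the structure of the polarization $H$ and its imaginary part $E = \Im(H)$, which is an alternating nondegenerate integer-valued form on the lattice $\Lambda$. By the elementary divisor theorem for alternating forms (the Frobenius normal form, essentially the proposition commented out earlier in the excerpt), we may choose a symplectic basis $\lambda_1, \dots, \lambda_g, \mu_1, \dots, \mu_g$ of $\Lambda$ in which $E$ has the matrix $\bigl(\begin{smallmatrix} 0 & D \\ -D & 0\end{smallmatrix}\bigr)$ with $D = \diag(d_1, \dots, d_g)$ and $d_i \mid d_{i+1}$. This reduces everything to a linear-algebra computation, so the first step is to set up the basis and express $K(L)$ in terms of $E$.

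Next I would identify $K(L)$ intrinsically. The standard fact is that $K(L) = \ker(\lambda_L)$ can be described as $\Lambda(L)/\Lambda$, where $\Lambda(L) = \{ x \in \Lambda \otimes \R \ : \ E(x, \Lambda) \subseteq \Z \}$ is the ``dual lattice'' of $\Lambda$ with respect to $E$. Equivalently, using the analytic description $A = \C^g/\Lambda$ and $A^\vee = \bar\Omega/\hat\Lambda$, one checks that $\lambda_L$ is induced by the map $H : \C^g \to \bar\Omega$ and that its kernel is $\Lambda(L)/\Lambda$. Establishing this identification is where the actual content lies; I would either cite it from Birkenhake--Lange or sketch it from the definition $\lambda_L(a) = \tau_a^* L \otimes L^{-1}$ together with the theorem of the square.

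Once $K(L) \cong \Lambda(L)/\Lambda$ is in hand, the computation is immediate in the symplectic basis. With respect to the dual basis, the condition $E(x, \Lambda) \subseteq \Z$ forces the coordinates of $x$ along the $\lambda_i$ to lie in $\tfrac{1}{d_i}\Z$ and similarly along the $\mu_i$, so that
\[
\Lambda(L) = \bigoplus_{i=1}^g \tfrac{1}{d_i}\Z\,\lambda_i \ \oplus \ \bigoplus_{i=1}^g \tfrac{1}{d_i}\Z\,\mu_i.
\]
Taking the quotient by $\Lambda = \bigoplus \Z\lambda_i \oplus \bigoplus \Z\mu_i$ then yields
\[
K(L) \cong \Big( \bigoplus_{i=1}^g \Z/d_i\Z \Big)^2 \cong \big(\Z/d_1\Z \times \cdots \times \Z/d_g\Z\big)^2,
\]
which is the claimed isomorphism.

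I expect the main obstacle to be the identification $K(L) \cong \Lambda(L)/\Lambda$ rather than the final combinatorial count, which is routine once the symplectic basis is fixed. In particular, care is needed to verify that the analytic representative of $\lambda_L$ really is the map determined by $H$ and that its kernel is exactly the quotient of the $E$-dual lattice by $\Lambda$; the positive-definiteness and nondegeneracy of $H$ guarantee that $\lambda_L$ is indeed an isogeny (finite kernel), so that $K(L)$ is a finite group whose order is $\det D = \prod_i d_i^2$, providing a useful consistency check on the final answer.
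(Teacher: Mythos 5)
Your proposal is correct: the identification $K(L)\cong\Lambda(L)/\Lambda$ with $\Lambda(L)=\{x : \Im H(x,\Lambda)\subseteq\Z\}$, followed by the computation in a symplectic (Frobenius) basis for $E=\Im H$, is exactly the standard argument, and the final count $\prod_i d_i^2 = \deg\lambda_L$ checks out. Note that the paper itself offers no proof of this statement --- it is quoted as a known preliminary, with Birkenhake--Lange's book as the reference --- and your argument is precisely the proof given there, so there is nothing to compare beyond observing that you have reconstructed the textbook proof the authors implicitly invoke.
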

It is useful to note that $\deg(\lambda_L)=|K(L)|=d_1^2 \times \ldots \times d_g^2$.

In order to understand better the relation between line bundles over isogenous abelian varieties, we introduce the Riemann bilinear form associated to a line bundle: if $K(L)$ is the kernel of a line bundle $L$ over $A=V/\Lambda$, we define the Riemann bilinear form as the bilinear alternating form
\begin{equation*}\begin{split}
e^L : K(L) \times K(L) &\longrightarrow \C^*\\
(x,y) &\longmapsto \exp^{-2i\pi H(\tilde{x},\tilde{y})},
\end{split}\end{equation*}
$\tilde{x}$, $\tilde{y}$ being lifting of respectively $x$, $y$ to the vector space $V$. Also we recall that $H=c_1(L)$ and we have $K(L)=\{x \in A \, | \, H(l,\tilde{x}) \in \Z, \, \mbox{for all} \; l \in \Lambda  \}$ (see \cite{debarre2005complex} Chapter VI Section 4).

Note that if the line bundle $L$ is ample then the form $e^L$ is non-degenerate. To appreciate the importance of this pairing, we state two useful results. whose proofs can be found in Birkenhake and Lange's book . Before stating them we recall that a subgroup $K < K(L)$ is totally isotropic with respect to $e^L$ if for all $x,y \in K$ we have $e^L(x,y)=1$.

%the first one relating line bundles of isogenous abelian varieties,

\begin{prop}\label{isogenythm}%pag 153 birkenhake-lange
For an isogeny $f:X \rightarrow Y$ of abelian varieties and a line bundle $L \in \Pic (X)$ the following statements are equivalent:
\begin{enumerate}
\item $L=f^*(M)$ for some $M \in \Pic(Y)$,
\item $\ker(f)$ is a totally isotropic subgroup of $K(L)$ with respect to $e^L$.
\end{enumerate}
\end{prop}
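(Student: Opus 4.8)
The plan is to reduce to the model case where $f$ is a quotient map and then translate both conditions through the Appell--Humbert description of line bundles. Since the analytic representation of an isogeny is a $\C$-linear isomorphism, I may write $X = V/\Lambda$ and $Y = V/\Gamma$ with $\Lambda \subseteq \Gamma$ two lattices in the \emph{same} vector space $V$, so that $f$ is induced by $\id_V$ and $\ker f = \Gamma/\Lambda$. Writing $L = L(H,\chi)$ with $E := \Im H$ integral on $\Lambda$ and $\chi$ a semicharacter (so that $E$ is the alternating form entering the definitions of both $K(L)$ and $e^L$), pullback along $f$ is simply restriction of the semicharacter: $f^*L(H',\chi') = L(H', \chi'|_{\Lambda})$. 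Hence statement $(1)$ amounts to the existence of a semicharacter $\chi'$ on $\Gamma$, compatible with $H$, that restricts to $\chi$ on $\Lambda$; and for $H$ to define a line bundle on $Y$ at all, the form $E$ must be integral on $\Gamma \times \Gamma$.

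The implication $(1)\Rightarrow(2)$ is then immediate. If $L = f^*M$ with $M$ a line bundle on $Y$, the Hermitian form of $M$ equals $H$ (pullback preserves the first Chern class), and integrality of $E = \Im H$ on $\Gamma \times \Gamma$ is exactly the statement that $e^L$, being the reduction modulo $\Z$ of $E$, is trivial on $\ker f = \Gamma/\Lambda$; the inclusion $\Gamma/\Lambda \subseteq K(L)$ follows from $E(\Gamma,\Lambda) \subseteq E(\Gamma,\Gamma) \subseteq \Z$, using $\Lambda \subseteq \Gamma$. So $\ker f$ is a totally isotropic subgroup of $K(L)$.

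For $(2)\Rightarrow(1)$, isotropy of $\ker f$ with respect to $e^L$ translates, by the same computation, into integrality of $E$ on $\Gamma \times \Gamma$, so $H$ already descends to a Hermitian form defining a line bundle on $Y$. The remaining, and genuinely substantive, point is to produce the semicharacter on $\Gamma$. I would argue as follows: since $E$ is an integral alternating form on the lattice $\Gamma$, there exists at least one semicharacter $\chi_0'$ on $\Gamma$ for $E$ (construct it on a Frobenius basis of $\Gamma$). Its restriction $\chi_0'|_{\Lambda}$ and the given $\chi$ are two semicharacters on $\Lambda$ for $E|_{\Lambda}$, so their quotient $\eta := \chi/\chi_0'|_{\Lambda}$ is an honest character of $\Lambda$. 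Because the group of complex numbers of modulus one is divisible, hence an injective $\Z$-module, $\eta$ extends to a character $\eta'$ of $\Gamma$; then $\chi' := \chi_0' \cdot \eta'$ is a semicharacter of $\Gamma$ for $E$ restricting to $\chi$, and $M := L(H,\chi')$ satisfies $f^*M = L$.

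The main obstacle is precisely this last extension step. The isotropy hypothesis only controls the imaginary part $E$ of the Hermitian form, whereas descent of the line bundle additionally requires descending the semicharacter; the content of the proposition is that this character-theoretic obstruction always vanishes, which I would pin entirely on the divisibility of the unit circle. Everything else is a direct unwinding of the definitions of $K(L)$ and $e^L$.
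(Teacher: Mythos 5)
Your proof is correct, but note that the paper itself contains no proof of this proposition: its ``proof'' is just the citation of Corollary 6.3.5 in Birkenhake--Lange, so the real comparison is with the textbook argument behind that citation. What you give is a complete, self-contained descent proof via Appell--Humbert: realizing $X = V/\Lambda$, $Y = V/\Gamma$ with $\Lambda \subseteq \Gamma$ and $f$ induced by $\id_V$; observing that $\ker f \subseteq K(L)$ together with isotropy is precisely integrality of $E=\Im H$ on $\Gamma \times \Gamma$; and then isolating the one non-formal point, descent of the semicharacter, which you settle by comparing $\chi$ with the restriction of an arbitrary semicharacter $\chi_0'$ on $\Gamma$ and extending the resulting genuine character of $\Lambda$ to $\Gamma$ by injectivity (divisibility) of the unit circle. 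All the ingredients check out: the ratio of two semicharacters for the same $E$ is an honest character, an integral alternating form on a lattice always admits a semicharacter (via an isotropic decomposition of the lattice), and Appell--Humbert uniqueness is what lets you translate $L=f^*M$ into $H_M=H$ and $\chi_M|_\Lambda=\chi$. (One cosmetic point: in $(1)\Rightarrow(2)$ you should verify $\ker f \subseteq K(L)$ \emph{before} evaluating $e^L$ on it, which you do establish, just in the reverse order.) By contrast, the route the paper implicitly relies on deduces $(1)\Rightarrow(2)$ from the functoriality statement it records as Proposition 2.2 (Birkenhake--Lange 6.3.3) and obtains $(2)\Rightarrow(1)$ from the theta-group formalism, lifting $\ker f$ to a level subgroup of $\mathcal{G}(L)$. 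It is worth noticing that both arguments bottom out in the same fact, divisibility of the circle group (resp.\ of $\C^*$), which kills the character-theoretic (resp.\ extension-splitting) obstruction; your version is more elementary and makes explicit exactly where the content lies, while the theta-group version is the one that generalizes to Mumford's algebraic theory.
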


\proof
See (\cite{birkenhake2013complex}, Corollary 6.3.5).

\begin{prop}\label{riemmform}%pag 152 birkenhake-lange
	Let $f:X \rightarrow Y$ be a surjective morphism of abelian varieties, and let $M$ be a line bundle over $Y$. Then $e^{f^*M}(x,x')=e^M(f(x),f(x'))$ for all $x,x' \in f^{-1}(K(M))$.
\end{prop}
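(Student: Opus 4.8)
The plan is to reduce everything to the analytic (universal-cover) description of the two abelian varieties and to the functoriality of the first Chern class under pullback. Write $X = V_X/\Lambda_X$ and $Y = V_Y/\Lambda_Y$. The surjective homomorphism $f$ lifts uniquely to a $\C$-linear map $F \colon V_X \lra V_Y$, its analytic representation, which satisfies $F(\Lambda_X) \subseteq \Lambda_Y$ and induces $f$ on the quotients; since $f$ is surjective so is $F$. Setting $H_M := c_1(M)$, the standard behaviour of the first Chern class under pullback (see \cite{birkenhake2013complex}) gives $c_1(f^*M)(u,w) = H_M(F(u),F(w))$ for all $u,w \in V_X$; in other words, the hermitian form defining $f^*M$ is simply $H_M$ precomposed with $F$.

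Before computing I would first check that both sides of the claimed identity are even defined. The right-hand side makes sense precisely because $x,x' \in f^{-1}(K(M))$, so $f(x),f(x') \in K(M)$. For the left-hand side I must verify the inclusion $f^{-1}(K(M)) \subseteq K(f^*M)$: if $x \in f^{-1}(K(M))$ and $l \in \Lambda_X$, then using the characterization $K(L) = \{x : H(l,\tilde x)\in\Z \text{ for all } l \in \Lambda\}$ recalled above together with the pullback formula, $c_1(f^*M)(l,\tilde x) = H_M(F(l), F(\tilde x))$; since $F(l) \in \Lambda_Y$ and $f(x) \in K(M)$, this value lies in $\Z$, whence $x \in K(f^*M)$.

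The computation itself is then immediate and is the crux. Fix lifts $\tilde x, \tilde x' \in V_X$ of $x, x'$. Because $f$ is induced by $F$, the vectors $F(\tilde x), F(\tilde x') \in V_Y$ are lifts of $f(x), f(x')$. Therefore
\[
e^{f^*M}(x,x') = \exp^{-2i\pi\, c_1(f^*M)(\tilde x,\tilde x')} = \exp^{-2i\pi\, H_M(F(\tilde x), F(\tilde x'))} = e^M(f(x),f(x')),
\]
where the middle equality is the pullback formula and the last one is the definition of $e^M$ applied to the lifts $F(\tilde x), F(\tilde x')$. The only point requiring care — the main, if modest, obstacle — is the compatibility of lifts: one must use that $F$ carries a chosen lift of $x$ to a lift of $f(x)$, so that literally the same vectors feed both expressions, and then invoke the well-definedness of $e^M$ on $K(M)$ to conclude independence of all the choices made.
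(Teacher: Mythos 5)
Your proof is correct. The paper itself gives no argument for this proposition --- it simply cites Proposition 6.3.3 of Birkenhake--Lange \cite{birkenhake2013complex} --- and your argument is essentially the standard proof found there: pass to the analytic representation $F$ of $f$, use functoriality of the first Chern class, $c_1(f^*M)(\cdot,\cdot) = H_M(F(\cdot),F(\cdot))$, and observe that $F$ takes lifts of $x$ to lifts of $f(x)$, so the two pairings are computed on literally the same vectors. Your verification that $f^{-1}(K(M)) \subseteq K(f^*M)$ (so that the left-hand side is defined at all) is a point the paper's citation glosses over, and it is handled correctly, consistently with the paper's stated characterization of $K(L)$.
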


\proof
See (\cite{birkenhake2013complex}, Proposition 6.3.3).
%Keeping the same notation, we recall that the Riemann bilinear form associated to $L$ $$e^L : K(L) \times K(L) \longrightarrow \C^*$$ is explicitly given by $(\alpha_1,\alpha_2) \longmapsto \exp^{-2i\pi H(\alpha_1,\alpha_2)}$. We say that a subgroup $K$ of $K(L)$ is totally isotropic if for all $x,y \in K$, $e^L(x,y)=1$. We now state the following useful result :
%
%\begin{prop}\label{isogenythm}
%	Let $A$ and $A'$ be two abelian varieties. Let $f:A \longrightarrow A'$ be an isogeny and $L \in \Pic(A)$ an ample line bundle,
%	then there exists $M \in \Pic(A')$ such that $L = f^* (M)$ if and only if $Ker(f)$ is a totally isotropic subgroup with respect to $K(L)$.
%\end{prop}  
%
%\subsection{Moduli space} \label{modspace}

To conclude this section we recall the construction of Birkenhake and Lange's involution: denote by $\mathcal{A}_g^{(d_1, \ldots, d_g)}$ the coarse moduli space parametrizing isomorphism classes of $g$-dimensional polarized abelian varieties of type $(d_1, \ldots, d_g)$; it is a quasi-projective variety of dimension $\frac{g(g+1)}{2}$. In~\cite{birkenhake2003isomorphism}, Birkenhake and Lange have shown that there is an isomorphism of coarse moduli spaces  $$\mathcal{A}_g^{(d_1, \ldots, d_g)} \xrightarrow{\cong} \mathcal{A}_g^{(\frac{d_1d_g}{d_{g}}, \frac{d_1d_g}{d_{g-1}} \ldots, \frac{d_1d_g}{d_{2}},\frac{d_1d_g}{d_1})}.$$
In the case where $g=4$ and $(1,1,2,2)$ is the polarization type, we get an automorphism $\rho : \mathcal{A}_4^{(1,1,2,2)} \longrightarrow \mathcal{A}_4^{(1,1,2,2)}$, associating to a polarized abelian variety $(A,L_A)$ its dual variety $(A^\vee,L_{A^\vee})$. The polarization $L_{A^\vee}$ on $A^\vee$  is constructed in order to satisfy $(L_{A^\vee})^\vee=L_A$ (see~\cite{birkenhake2003isomorphism}, Proposition 2.3). Since $(A^\vee)^\vee=A$ we get $\rho^2((A,L_A))=(A,L_A)$, hence $\rho$ is an involution on the moduli space $\mathcal{A}_4^{(1,1,2,2)}$.
 
 We remind that to give a polarization on an abelian variety $A$ is equivalent to give an isogeny $\lambda : A \longrightarrow A^{\vee}$ with $\lambda = \lambda^{\vee}$.
 
\subsection{Prym maps and Prym varieties}\label{Prym_theory}

Let $C \in \mathcal{M}_g, D \in \mathcal{M}_{g'}$, and let $D \xrightarrow{\pi} C$ be a finite morphism of degree $d$ branched on a divisor $B=q_1+\ldots +q_r$, with $q_i \in C$ and $q_i \neq q_j$ for all $i \neq j$. If $B$ is nonzero, we call the morphism $\pi$ a branched covering of $C$ of degree $d$. 
%We call branch locus (or branch divisor) the divisor $B$, and we call ramification locus (or ramification divisor) the pullback of $B$ via $\pi$.

 Even though what follows can be defined for a general degree $d$, we shall focus on the case $d=2$, which is of interest for us. In this case, the curve $D$ is obtained as $\Spec \, (\mathcal{O}_C \oplus \eta^{-1})$ with $\eta \in \Pic(C)$ such that $\eta^{\otimes 2} \cong \mathcal{O}_C(B)$. Observe that if $\pi$ is a degree-$2$ cover which ramifies over $r$ points of $C$, then the Riemann-Hurwitz formula gives that the genus of $D$ is $2g-1+\frac{r}{2}$ (note that $r$ has to be even). 
% 
% 
% Let now $R=p_1+\ldots+p_r$ be the ramification divisor of the degree two covering $\pi$. Using Riemann-Hurwitz formula we get that the genus of $D$ is $2g-1+\frac{r}{2}$ (note that $\deg(R)=r$ has to be even). 

%We recall that the pullback morphism $JC \xrightarrow{\pi^*} JD $ is simply the pullback of a divisor $P \in \Pic^0(C)$.  

\begin{comment}
	
	We then get the following diagram 
	\begin{center}
	
	\begin{tikzcd} 
	D \arrow[hookrightarrow]{r}{}\arrow[hookrightarrow]{d}{\pi} 
	& \Alb(D)=JD \\ %\ar[d, dashed, "Nm_{\pi}"] \\
	C \arrow[hookrightarrow]{r}{} 
	& JC
	\end{tikzcd}
	\end{center}
	
	Which is completed thanks to the universal property of the Albanese of $D$. Hence we get the following commutative diagram 
	
	\begin{center}
	
	\begin{tikzcd} 
	D \arrow[hookrightarrow]{r}{}\arrow[hookrightarrow]{d}{\pi} 
	& \Alb(D)=JD \ar[d, "Nm_{\pi}"] \\
	C \arrow[hookrightarrow]{r}{} 
	& JC
	\end{tikzcd}
	\end{center}

\end{comment}

Let $Nm_{\pi} : JD \longrightarrow JC$ be the norm map. We remind that it is surjective, and its kernel is connected when the covering $\pi$ is branched, otherwise it has two components.

 We are ready to define the Prym variety attached to a cover.
\begin{defin}
The Prym variety attached to the cover $D \xrightarrow{\pi} C$ is the connected component containing the origin of the kernel of the norm map: $$P(D,C)=\ker(Nm_\pi)^0.$$
\end{defin}

The Prym variety $(P(D,C), \varXi)$ turns out to be a polarized abelian variety of dimension $g-1+\frac{r}{2}$ : the polarization $\varXi$ is obtained as the first Chern class of the restriction on $P(D,C)$ of the line bundle $\mathcal{O}_{JD}(\Theta_D)$, where $\Theta_D$ is the principal polarization of $JD$. Note that in the case of $d=2$, $\varXi$ is of type $\underbrace{(1, \ldots , 1,}_{\frac{r}{2}-1}$ $\underbrace{2, \ldots ,2)}_{g}$.
% Moreover, $\dim (P(D,C)) = \dim (JD) - \dim (JC) = g-1+\frac{r}{2}$.
%
%\cite{birkenhake2013complex}
%\cite{donagi1992fibers}
%\cite{marcucci2012generic}
%\cite{bardelli1989curves}
%
%

\subsection{The bigonal construction} 

The bigonal construction is a procedure that associates to a tower of double coverings $D \rightarrow C \rightarrow K$ another tower of double coverings $\Gamma \longrightarrow \Gamma_0 \longrightarrow K$, such that $P(\Gamma,\Gamma_0)$ is the dual of $P(D,C)$. Since the duality result of Prym varieties will be useful later in our discussion, we give some details (see Pantazis for an accurate description~\cite{pantazis1986prym}).

Let $\varphi : C \rightarrow K $ be a covering of degree $2$ (hence the "bi" in bigonal) and $\pi : D \rightarrow C$ be a branched covering of degree $2$. The curve $D$ is equipped with an involution $\iota$ which exchanges the two elements of the fiber over a generic point $c \in C$. The two given coverings determine a degree $2^2$ covering $\Gamma \longrightarrow K$, whose fiber over a generic point $k \in K$ consists of $4$ sections $s_k$ of $\pi$ over $k$ :
$$s_k : \varphi^{-1}(k) \longrightarrow \pi^{-1}\varphi^{-1}(k), \qquad \qquad \pi \circ s_k = id_K.$$
 
Now observe that there is an immersion of $K$ in $C^{(2)}$ sending $x \in K$ to $\varphi^{-1}(x) \in C^{(2)}$. The curve $\Gamma$ is then defined as the pre-image of $K$ by the map $\varphi^{(2)} : D^{(2)} \longrightarrow C^{(2)}$.

\begin{comment}
	
	defined set-theoretically as $$\Gamma = \{B \in \Pic^2(D) \; | \; F_{Nm(B)}=\varphi^{-1}(k), \; k \in K\},$$
	%
	where $Nm : \Pic^2(D) \longrightarrow \Pic^2(C)$ sends $[x_1+x_2] \in \Pic^2(D)$ to $[\pi(x_1) + \pi(x_2)]$, and $F_B=\{x_1,x_2 \, | \, B=[x_1+x_2]\}$.
	With this definition, 
\end{comment}

Note that we can view a point $p \in \Gamma$ which belongs to the fiber of some $k \in K$ as a section $s_k$. 
 
There is an involution on $\Gamma$ defined by $\tilde{\iota}(s_k)=\iota \circ s_k$, $k \in K$, which in turn gives an equivalence relation where two points $s_1,s_2 \in \Gamma$ are said to be equivalent if $s_1=\tilde{\iota}(s_2)$. Considering the quotient $\Gamma_0=\Gamma/\tilde{\iota}$ we obtain a tower of degree $2$ coverings $\Gamma \longrightarrow \Gamma_0 \longrightarrow K$.

The two towers $D \xrightarrow{\pi} C \xrightarrow{\varphi} K$ and $\Gamma \xrightarrow{\tilde{\pi}} \Gamma_0 \xrightarrow{\tilde{\varphi}} K$ are said to be bigonally related (see Donagi for details~\cite{donagi1992fibers}). Since $\varphi$ and $\pi$ are branched, this implies that $\tilde{\varphi}$ and $\tilde{\pi}$ are branched as well.
% Moreover, observe that the bigonal construction exchanges branch loci.

For $K=\mathbb{P}^1$, we have the following result due to Pantazis~\cite{pantazis1986prym}: 
\begin{teo}\label{Pantazis}
	Consider a pair of maps of degree $2$, $D \rightarrow C \rightarrow \mathbb{P}^1$, and the bigonally related tower
	$\Gamma \rightarrow \Gamma _0 \rightarrow \mathbb{P}^1$.
	Consider then the Pryms:
	\begin{equation*}\begin{split}
	P(D,C) := \ker(Nm: J(D) \rightarrow J(C))^0, \\
	P(\Gamma,\Gamma_0) := \ker(Nm: J(\Gamma) \rightarrow J(\Gamma _0))^0.
	\end{split}\end{equation*}
	Then $(P(D,C), \, \Xi \, )$ and $(P(\Gamma,\Gamma_0), \, \Xi ' \, )$ are dual as polarized abelian varieties.
\end{teo}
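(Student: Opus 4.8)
The plan is to realise all four curves as quotients of a single Galois cover of $K=\PP^1$ and then to compare the two Pryms inside the Jacobian of that cover. First I would pass to the Galois closure of the tower $D\to C\to K$. Since this is a tower of two double covers, the composite $D\to K$ has degree $4$, and its monodromy must preserve the partition of each fibre into the two $\pi$-fibres sitting over $\varphi^{-1}(k)$; hence the monodromy lies in the wreath product $\Z/2\wr\Z/2$, the dihedral group $G$ of order $8$, and for a general tower it is all of $G$. Let $Z\to K$ be the associated Galois $G$-cover. Writing $G=\langle a,b,\tau\rangle$ with $a,b$ the two commuting ``flips'' of the $\pi$-fibres and $\tau$ the ``swap'' of the two points of $\varphi^{-1}(k)$, one identifies $C=Z/V_1$ for $V_1=\langle a,b\rangle$ the Klein four-group of flips, and $D=Z/\langle b\rangle$. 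Checking the definition of the bigonal curve $\Gamma$ as the sections of $\pi$ over the fibres of $\varphi$, together with the fact that $\tilde\iota=$ ``flip both points'' $=ab$, one obtains $\Gamma_0=Z/V_2$ and $\Gamma=Z/\langle\tau\rangle$, where $V_2=\langle\tau,ab\rangle$ is the \emph{other} Klein four-group of $G$. Thus, at the level of Galois theory, the bigonal construction is exactly the operation interchanging $V_1,V_2$ and the two conjugacy classes of reflections.

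Next I would decompose $JZ$ up to isogeny along the irreducible representations of $G$, which are four characters and a single two-dimensional irreducible $\rho$. By the standard correspondence between intermediate covers and isotypic pieces, $J(Z/H)$ is isogenous to the sum of those isotypic components of $JZ$ having a nonzero $H$-fixed vector, and the Prym of $Z/H_1\to Z/H_2$ is cut out by the representations possessing an $H_1$-fixed vector but no $H_2$-fixed vector. A short character computation settles both cases at once: every character of $G$ is trivial on the centre $\langle ab\rangle$, hence factors through $G^{\mathrm{ab}}\cong(\Z/2)^2$, so any character trivial on the reflection $b$ is already trivial on all of $V_1$, and any character trivial on $\tau$ is already trivial on all of $V_2$. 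Hence no character contributes to either Prym. On the other hand $\rho$ contributes to both: restricted to any reflection it has a one-dimensional fixed space, while restricted to either Klein four-group it has none. Therefore $P(D,C)$ and $P(\Gamma,\Gamma_0)$ are isogenous to one and the same abelian variety, the $\rho$-isotypic component $B_\rho\subset JZ$; in particular they are isogenous and of equal dimension.

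The remaining and decisive point is that the two polarizations $\Xi,\Xi'$ induced on this common abelian variety are \emph{dual} rather than equal. To pin this down I would make the comparison isogeny explicit through the incidence correspondence $\mathcal I\subset D\times_K\Gamma$, relating a point $d\in D$ to a section $s\in\Gamma$ precisely when $d$ lies on the image of $s$; this correspondence is $G$-equivariant and induces the map between the two Pryms. I would then transport the principal polarizations $\Theta_D$ and $\Theta_\Gamma$ to $B_\rho$ through $\mathcal I$ and compare the resulting homomorphisms $B_\rho\to B_\rho^\vee$, using Proposition \ref{riemmform} to carry the Riemann form across the covers and the identity $Nm\circ\pi^*=[2]$ for each double cover to control the induced maps on the Pryms. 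The outcome to be verified is that these two homomorphisms realise $\Xi'$ as the dual polarization $\Xi^\vee$ on $P(D,C)^\vee$, that is, $(P(\Gamma,\Gamma_0),\Xi')\cong(P(D,C)^\vee,\Xi^\vee)$. This last step is the main obstacle: one must track the two index-$2$ sublattices and the two distinct reflection classes with care, since it is precisely the mismatch between them --- recorded by the Riemann pairings on $K(\Theta)$ restricted to $\mathcal I$ --- that upgrades the bare isogeny of underlying abelian varieties into a duality of polarized ones. All of the genuine content of Pantazis's theorem is concentrated here.
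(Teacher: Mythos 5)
First, a remark on the comparison itself: the paper does not prove Theorem \ref{Pantazis} at all --- it is quoted from Pantazis \cite{pantazis1986prym} and used as a black box --- so your attempt has to be judged as a standalone proof of Pantazis's theorem, not against an argument in the text. Your setup is the correct and standard one (it is essentially the framework of Pantazis and of Donagi): the Galois closure $Z\to\mathbb{P}^1$ of a general tower has the dihedral group $G$ of order $8$ as Galois group, the four curves are $D=Z/\langle b\rangle$, $C=Z/\langle a,b\rangle$, $\Gamma=Z/\langle\tau\rangle$, $\Gamma_0=Z/\langle\tau,ab\rangle$, and the bigonal construction interchanges the two Klein four-groups and the two conjugacy classes of reflections. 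Your character computation is also right, and it does prove that $P(D,C)$ and $P(\Gamma,\Gamma_0)$ are isogenous, both being isogenous to the factor $B_\rho$ attached to the unique two-dimensional irreducible representation of $G$.

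The genuine gap is the one you point to yourself: the theorem is a statement about polarized abelian varieties, and the isotypic decomposition is completely blind to polarizations. An isogeny between the two Pryms is almost no information (any abelian variety is isogenous to its own dual), and no refinement of the character-theoretic bookkeeping can ever distinguish ``dual'' from ``isomorphic.'' That distinction is the whole theorem: in the general bigonal situation the types of $\Xi$ and $\Xi'$ are complementary --- the number of $1$'s in the type of $\Xi$ is governed by the branching of $\pi$, while that of $\Xi'$ is governed by the branching of $\varphi$ --- so in general the two Pryms are not even abstractly isomorphic as polarized varieties, and one must exhibit a specific isomorphism $P(\Gamma,\Gamma_0)\cong P(D,C)^\vee$ carrying $\Xi'$ to the dual polarization. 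That requires the lattice-level computation (tracking the Riemann forms through the correspondence on $D\times_{\mathbb{P}^1}\Gamma$, or Pantazis's explicit period-matrix argument) which your final paragraph only announces, conceding that it is ``the main obstacle'' and that ``all of the genuine content of Pantazis's theorem is concentrated here.'' As it stands, the proposal is an accurate reduction of the theorem to its hardest step, together with a correct proof of the (much weaker) isogeny statement; it is not a proof of the theorem.
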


We conclude this introductory section by briefly defining some notions and fixing some notation which we shall use throughout the rest of this work :

\begin{itemize}
\item $X_m < X$ is the kernel of $\cdot m : X \longrightarrow X$, the multiplication by $m$. We will usually refer to $X_m$ as the subgroup of $m$-torsion elements of $X$.

\item $\mathcal{R}_{g,r}$ will denote the moduli space of double coverings of a curve of genus $g$ with $r$ ramification points.
% Using Riemann-Hurwitz formula, we have $g(D) = 2g-1+r/2$;

%\[\mathcal{R}_{g,r} = \left\lbrace \pi : D \rightarrow C,\, g(C)=g, \, \deg{\pi}=2, \, r \mbox{ branch points}\right\rbrace.\] 

%\item to every double covering $\pi : D \rightarrow C$ in $\mathcal{R}_{g,r}$, we attach the norm map $J(D) \rightarrow J(C)$. We denote $P(D,C)$ the Prym variety related to the cover, i.e. the identity component of the kernel of the norm map. Moreover 
\item we denote as $P: \mathcal{R}_{g,r} \rightarrow \mathcal{A}^{\delta}_{g-1-\frac{r}{2}}$ the Prym map, associating to a covering its Prym variety.

\item if $C$ is a curve, $\Theta_C$ will denote the principal polarization of the Jacobian $JC$. If $A$ is a polarized abelian variety, we will use the line bundle $L_A$ to refer to the polarization of $A$, instead of the hermitian form $H=c_1(L_A)$. 
%Moreover the notation $\lambda_{L_A}$ will be used for the isogeny from $A$ to its dual $A^\vee$ induced by the polarization, which sends an element $a \in A$ to the degree zero line bundle $\tau_a^*(L_A)\otimes L_A^{-1}$.
\end{itemize}

\section{Construction of divisors in $\mathcal{A}_4^{(1,1,2,2)}$}\label{construction}
In this section we construct two divisors of the moduli space $\mathcal{A}_4^{(1,1,2,2)}$: the first one will be constructed as the closure of the image of $\mathcal{R}_{2,6}$ by the Prym map $P$, the other one will be obtained as the image of $\mathcal{M}_g$ in $\mathcal{A}_4^{(1,1,2,2)}$ via the Torelli map and a quotient construction.
% jacobian variety by an appropriate subgroup. %We then check how the involution defined by Birkenhake and Lange~\cite{birkenhake2003isomorphism} acts on these divisors.

\subsection{Prym construction}
The first construction of a divisor in $\mathcal{A}_4^{(1,1,2,2)}$ immediately follows  from the Prym map $P: \mathcal{R}_{2,6} \rightarrow \mathcal{A}_4 ^{(1,1,2,2)}$ which sends a covering $\pi : D \rightarrow C$ in $\mathcal{R}_{2,6}$ to its Prym variety. From the general theory of Pryms, we get that since the cover $\pi$ ramifies, the kernel of the norm map is connected, thus $P(D,C) = \ker \lbrace Nm(\pi): JD \rightarrow JC \rbrace$. We obtain a Prym variety of dimension $4$ and polarization of type $(1,1,2,2)$.

The Prym map $P: \mathcal{R}_{2,6} \rightarrow \mathcal{A}_4 ^{(1,1,2,2)}$ has been studied in a recent work of J. C. Naranjo and A. Ortega~\cite{naranjo2016degree}: the two authors show that it is injective. Since $\mathcal{R}_{2,6}$ has dimension $3g -3 +r=9$, the closure of its image by $P$ is a divisor of the $10$-dimensional moduli space $\mathcal{A}_4^{(1,1,2,2)}$. We name the obtained divisor $\mathcal{P}$.

\subsection{Quotient construction}
First, we define the following moduli space of principally polarized abelian varieties of dimension $4$ with a fixed totally isotropic subgroup of $2$-torsion elements:
\begin{equation*}\begin{aligned}
\tilde{\mathcal{A}}_4 = \lbrace (X, L_X, H) | (X,L_X) \mbox{ is a ppav of dimension $4$, } \\ H \subset X_2 \mbox{ is a totally isotropic subgroup of four elements}\rbrace / \cong.
\end{aligned}\end{equation*}
For $(X,L_X,H) \in \tilde{\mathcal{A}}_4 $, let $A:=X/H$. This gives a degree $4$ isogeny $f : X \rightarrow A$. Thanks to Proposition (\ref{isogenythm}), we can choose over $A$ a polarization $L_A$ whose pullback by $f$ is $L_X ^{\otimes 2}$. Considering the isogenies induced by the polarizations, we get the following diagram:

\[\xymatrixcolsep{5pc}\xymatrixrowsep{3pc}
\xymatrix{
X \ar[r]^f \ar[d]_{2\lambda _{X}} & A \ar[d]^{\lambda_{L_A}} \\
X^\vee  & A^\vee \ar[l]^{f^\vee}}
\]

Observe that $X_2 = f^{-1}(\ker(f^{\vee} \circ \lambda _{L_A}))$. Computing the degree of the involved maps we get that $\deg(2\lambda _{X}) = | X_2 | = 2^8$ has to be equal to $\deg(f^\vee \circ \lambda_{L_A} \circ f) = 2^2 \cdot | \ker(\lambda_{L_A}) | \cdot 2^2 $, meaning that $| \ker(\lambda _{L_A})| = 2^4$. Observe as well that $\ker(\lambda _{L_A})$ is a commutative subgroup of $A_2$ and therefore all of its elements have order two. Then   $$\ker(\lambda _{L_A}) \cong (\mathbb{Z}/2\mathbb{Z} \times \mathbb{Z}/2\mathbb{Z})^2,$$ meaning that $(A, L_A) \in \mathcal{A}_4^{(1,1,2,2)}$. 

This construction gives a finite covering $ \varpi: \tilde{\mathcal{A}}_4 \longrightarrow \mathcal{A}^{(1,1,2,2)}_4$ which sends a triple $(X,L_X,H)$ to $(A,L_A)$.

Now let us consider the Torelli map $\tau : \mathcal{M}_g \rightarrow \mathcal{A}_g$, associating to every smooth curve of genus $g \geq 3$ its Jacobian as a principally polarized abelian variety. This map is well known to be injective (Torelli Theorem), so the closure of its image, called the Jacobian (or Torelli) locus, is a subvariety of $\mathcal{A}_g$ of dimension $3g-3$. Focusing on our case of interest, which is for $g=4$, we obtain that the Jacobian locus is actually a sub-variety of dimension $9$ of $\mathcal{A}_4$. Hence, we can consider the Jacobian locus inside $\tilde{\mathcal{A}_4}$ in the natural way and its image by the finite covering $\varpi$ defines a divisor in $\mathcal{A}_4^{(1,1,2,2)}$. We shall call it $\mathcal{J}$.

%Repeating the previous construction to the only principally polarized abelian varieties obtaind as Jacobian of a curve in $\mathcal{M}_4$, we finally obtain a codimension $1$ subvariety of . We call  this divisor.

%\begin{remark}
%%In this section we have constructed two different divisors of $\mathcal{A}_4^{(1,1,2,2)}$ : $\mathcal{P}$ and $\mathcal{J}$ . 
%We recall that we have the involution $\rho :\mathcal{A}_4^{(1,1,2,2)} \rightarrow \mathcal{A}_4^{(1,1,2,2)}$. We would like to check whether $\mathcal{P}$ and $\mathcal{J}$ are fixed by this involution, which is what we do in the next section. 
%%Applying $\rho$ to the divisors $\mathcal{P}$ and $\mathcal{J}$ we obtain two other divisors $\mathcal{P}'$ and $\mathcal{J}'$. We have to check if they are actually new. More precisely, 
%\end{remark}

\section{The main Theorem}\label{SectProof}

In the previous section we have obtained the divisor $\mathcal{P}$ via the Prym construction, and the divisor $\mathcal{J}$ obtained from the Jacobian locus thanks to the quotient construction. In this section we ask how do these two divisors behave under the involution $\rho$. We state here our main result:

\begin{teo}\label{main_thm}
Let $\mathcal{P}$ and $\mathcal{J}$ be the divisors of $\mathcal{A}_4^{(1,1,2,2)}$ constructed in Section \ref{construction}. Let $\rho : \mathcal{A}_4^{(1,1,2,2)} \rightarrow \mathcal{A}_4^{(1,1,2,2)}$ be the Birkenhake and Lange's involution. Then we have the following:
\begin{enumerate}
\item $\mathcal{P} = \rho (\mathcal{P})$: $\mathcal{P}$ is invariant under the involution;
\item $\mathcal{J} \neq \rho(\mathcal{J})$: $\mathcal{J}$ is not invariant under the involution.
\end{enumerate}
\end{teo}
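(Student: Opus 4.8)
The plan is to treat the two assertions by completely different mechanisms, since they reflect different phenomena.

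For part (1), the goal is to show that $\mathcal{P}$ is preserved by $\rho$, which sends $(A,L_A)$ to its dual $(A^\vee, L_{A^\vee})$. The natural approach is to invoke Pantazis' duality (Theorem~\ref{Pantazis}) directly. A generic point of $\mathcal{P}$ is a Prym variety $P(D,C)$ attached to a cover $\pi: D \to C$ in $\mathcal{R}_{2,6}$, where $C$ has genus $2$. First I would exhibit on such a generic $C$ a degree-$2$ map $\varphi: C \to \mathbb{P}^1$; this exists because every genus-$2$ curve is hyperelliptic, so the hyperelliptic pencil furnishes $\varphi$ canonically. This produces a tower $D \xrightarrow{\pi} C \xrightarrow{\varphi} \mathbb{P}^1$, to which the bigonal construction applies, yielding a bigonally related tower $\Gamma \to \Gamma_0 \to \mathbb{P}^1$. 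By Theorem~\ref{Pantazis}, $P(\Gamma,\Gamma_0)$ is dual to $P(D,C)$ as a polarized abelian variety, i.e.\ $\rho(P(D,C)) = P(\Gamma,\Gamma_0)$. The remaining point is to check that $\Gamma \to \Gamma_0$ again lies in $\mathcal{R}_{2,6}$, so that $P(\Gamma,\Gamma_0) \in \mathcal{P}$: one must verify that $\Gamma_0$ has genus $2$ and that $\tilde\pi$ is branched at $6$ points, which follows from a Riemann--Hurwitz bookkeeping on the bigonal tower together with the fact that the dual must again have polarization type $(1,1,2,2)$ and dimension $4$. Taking closures and using that $\rho$ is an involution then gives $\rho(\mathcal{P}) = \mathcal{P}$.

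For part (2), the strategy is to argue by contradiction using the rigidity theorem of Bardelli and Pirola quoted in the introduction. A generic point of $\mathcal{J}$ comes from $(X,L_X,H)$ with $X = JC$ for a generic curve $C$ of genus $4$, mapped to $A = JC/H$. Suppose toward a contradiction that $\mathcal{J} = \rho(\mathcal{J})$. Then the dual $(A^\vee, L_{A^\vee})$ would again lie in $\mathcal{J}$, hence would be of the form $JC'/H'$ for some genus-$4$ curve $C'$ and isotropic $H' \subset (JC')_2$. I would then assemble an isogeny between $JC$ and $JC'$ by composing the defining isogeny $f: JC \to A$, the polarization isogeny identifying $A$ with a twist of $A^\vee$, and the dual of the isogeny $JC' \to A^\vee$; the upshot is an isogeny $JC \to JC'$ of degree a power of $2$. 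Since $C$ is generic of dimension $4 \geq 4$, Bardelli--Pirola forces $JC \cong JC'$ with the isogeny being multiplication by an integer, hence of square degree $n^8$. The contradiction should come from comparing this with the actual degree of the constructed isogeny, which is a specific power of $2$ not of the form $n^8$, or equivalently from the fact that multiplication by $n$ cannot factor through the non-trivial isotropic quotient by $H$.

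The main obstacle is part (2), and within it the careful construction and degree computation of the isogeny $JC \to JC'$ from the hypothetical equality $\rho(\mathcal{J}) = \mathcal{J}$: one must track how $\rho$, which is defined only on isomorphism classes in the coarse moduli space, lifts to an actual dual abelian variety together with its polarization, and then verify that the composite isogeny is genuinely a polarized isogeny of the type Bardelli--Pirola requires (generic source Jacobian, dimension at least $4$). Checking that the resulting degree is incompatible with multiplication-by-$n$, so that no such $C'$ can exist, is the crux; the genericity hypothesis on $C$ must be used to guarantee that the Bardelli--Pirola dichotomy applies and that $\mathcal{J}$ meets the generic Jacobian locus, which is where I expect the argument to require the most care.
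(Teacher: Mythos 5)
Your plan follows essentially the same route as the paper's proof: for (1), the hyperelliptic pencil, the bigonal construction, Pantazis' duality, and a Riemann--Hurwitz count showing the bigonally related tower again lies in $\mathcal{R}_{2,6}$; for (2), a contradiction via Bardelli--Pirola applied to a composite isogeny between Jacobians whose degree is a power of $2$ but not an $8$th power of an integer (the paper computes $2^{12}$ for its composite $f_C^{\vee}\circ\lambda_{L_A}\circ f_D$). The only repair needed is that your composite does not typecheck as written (the dual of $g : JC' \to A^{\vee}$ is a map $g^{\vee} : A \to (JC')^{\vee}$, so it cannot be preceded by $\lambda_{L_A} : A \to A^{\vee}$); using instead $JC \xrightarrow{f} A = (A^{\vee})^{\vee} \xrightarrow{g^{\vee}} (JC')^{\vee} \cong JC'$, of degree $2^4$, the same contradiction goes through.
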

\begin{proof}[Proof of point (1)]
To prove point (1) of Theorem (\ref{main_thm}), we need to show that the dual of a Prym variety inside the Prym divisor $\mathcal{P}$ is also a Prym variety. This follows from the bigonal construction and Theorem (\ref{Pantazis}). In fact, let $D \xrightarrow{\pi} C$ be a general  branched covering in $\mathcal{R}_{2,6}$. $C$ is a hyperelliptic curve since has genus two, so we get a degree two covering $ C \xrightarrow{\varphi} \mathbb{P}^1$. The ramification points of $\varphi$ are the six Weierstrass points on $C$, which by generality we can suppose to be different than the branch locus of the covering $D \xrightarrow{\pi} C$. Applying the bigonal construction to the tower $D \xrightarrow{\pi} C \xrightarrow{\varphi} \mathbb{P}^1$, we get a corresponding tower $\Gamma \xrightarrow{\tilde{\pi}} \Gamma_0 \xrightarrow{\tilde{\varphi}} \mathbb{P}^1$, where $\Gamma \xrightarrow{\tilde{\pi}} \Gamma_0$ is a degree two covering with $6$ branch points. Now we need to see that $P(\Gamma,\Gamma_0)$ is in $\mathcal{P}$. Let us count the genera of the curves $\Gamma$ and $\Gamma_0$: the ramification divisor of the degree $4$ covering $\Gamma \longrightarrow \mathbb{P}^1$ is 
$$R=w_1 + \ldots + w_6 + b_1 + \ldots +b_6 + b'_1 + \ldots + b'_6,$$ 
where $w_i$ is in the fiber over $k_{w_i} \in \mathbb{P}^1$, which is the image of a Weierstrass point by $\varphi$, whereas $b_i$'s and $b'_i$'s are the elements of the fiber over
$k_{b_i} \in \mathbb{P}^1$ which is the image by $\varphi$ of a branch point of $\pi$. Hence $\deg(R)=18$, so by the Riemann-Hurwitz formula, we get that the genus of $\Gamma$ is $6$. Now the ramification divisor of the degree $2$ covering $\Gamma \xrightarrow{\gamma} \Gamma_0$ is 
$$R'=w_1 + \ldots + w_6,$$
the points $w_i$ are as described above and are those fixed by the involution $\tilde{\iota}$, so $\deg(R')=6$. Using the Riemann-Hurwitz formula we get that the genus of $\Gamma_0$ is $2$. Thus the covering $\Gamma \xrightarrow{\tilde{\pi}} \Gamma_0$ lives in $\mathcal{R}_{2,6}$, meaning that $P(\Gamma,\Gamma_0)$ is indeed in the divisor $\mathcal{P}$. Using Theorem (\ref{Pantazis}), we get that $P(D,C)$ and $P(\Gamma,\Gamma_0)$ are dual, which concludes the proof of (1).

\end{proof}

Part (2) of Theorem (\ref{main_thm}) requires more work.
 
From now on, let $(X,L_X)=(JC,\Theta_C)$ for some curve $C$, and $(A,L_A)=(\frac{JC}{\langle\alpha_1,\alpha_2\rangle},L_A)$  where $\alpha_1$ and $\alpha_2$ are $2$-torsion elements in $JC$ satisfying \\ $e^{2\Theta_C}(\alpha_1,\alpha_2)=1$, $e^{2\Theta_C}$ being the Riemann bilinear form associated to $\mathcal{O}_{JC}(2\Theta_C)$. Recall that elements in $\mathcal{J}$ are polarized abelian varieties $(A,L_A)$ with an isogeny of degree 4 from a Jacobian $f : JC \longrightarrow A$, such that $f^*(L_A)=\mathcal{O}_{JC}(2\Theta_C)$. The divisor $\mathcal{J}' = \rho (\mathcal{J})$ has to be a variety whose elements are polarized abelian varieties $(A',L_{A'})$ with an isogeny of degree 4 to a Jacobian $f' : A' \longrightarrow JC'$, such that $f'^*(\Theta_{C'})=L_{A'}$.

To find a more explicit description of $\mathcal{J}'$, the following Lemma will be useful:

\begin{lemma}\label{lemma_ker}
The kernel $K(L_A)$ is given by :
\begin{equation*}
K(L_A) = \frac{\langle \alpha _1 , \alpha _2 \rangle ^\perp}{\langle \alpha _1, \alpha _2 \rangle} \subset \frac{JC}{\langle \alpha _1 , \alpha _2 \rangle} = A,
\end{equation*}
where orthogonality is considered in $JC_2$ with respect to $e^{2\Theta_C}$.
\end{lemma}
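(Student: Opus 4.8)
The plan is to reduce the computation of $K(L_A)\subseteq A=JC/\langle\alpha_1,\alpha_2\rangle$ to the computation of its preimage $f^{-1}(K(L_A))$ inside $JC$, and to show that this preimage is exactly the orthogonal complement $\langle\alpha_1,\alpha_2\rangle^\perp$ taken in $JC_2$. Writing $H:=\langle\alpha_1,\alpha_2\rangle=\ker f$, since $f$ is surjective we have $K(L_A)=f\big(f^{-1}(K(L_A))\big)$, so once we prove $f^{-1}(K(L_A))=H^\perp$ the lemma follows at once: $f$ is the quotient map by $H$ and $H\subseteq H^\perp$ (as $H$ is isotropic), whence $K(L_A)=f(H^\perp)=H^\perp/H$.

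The first step is to check that $f^{-1}(K(L_A))\subseteq JC_2$, so that orthogonality in $JC_2$ even makes sense. Here I would use the functoriality $\lambda_{f^*L_A}=f^\vee\circ\lambda_{L_A}\circ f$. Since $f^*L_A=\mathcal{O}_{JC}(2\Theta_C)$, this reads $f^\vee\circ\lambda_{L_A}\circ f=2\lambda_{\Theta_C}=\lambda_{\Theta_C}\circ[2]$, whose kernel is $JC_2$ because $\lambda_{\Theta_C}$ is an isomorphism. As $f^{-1}(K(L_A))=\ker(\lambda_{L_A}\circ f)\subseteq\ker(f^\vee\circ\lambda_{L_A}\circ f)=JC_2$, the inclusion follows; note this simultaneously identifies $K(\mathcal{O}(2\Theta_C))=JC_2$, on which $e^{2\Theta_C}$ is a nondegenerate alternating form.

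The second step is to prove $f^{-1}(K(L_A))\subseteq H^\perp$. Take $x\in f^{-1}(K(L_A))$ and any $h\in H$. Since $f(h)=0\in K(L_A)$, both $x$ and $h$ lie in $f^{-1}(K(L_A))$, so Proposition \ref{riemmform} applies and yields $e^{2\Theta_C}(x,h)=e^{L_A}(f(x),f(h))=e^{L_A}(f(x),0)=1$. Thus $x$ is orthogonal to every $h\in H$, i.e. $x\in H^\perp$. The reverse inclusion I would then extract by a cardinality count: $f$ has degree $4$ and $|K(L_A)|=2^4$ (the type being $(1,1,2,2)$), so $|f^{-1}(K(L_A))|=4\cdot 2^4=2^6$; on the other hand $JC_2\cong(\Z/2\Z)^8$ with nondegenerate $e^{2\Theta_C}$, so the orthogonal of the two-dimensional isotropic subspace $H$ has dimension $6$ and $|H^\perp|=2^6$. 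Matching cardinalities upgrades the inclusion to $f^{-1}(K(L_A))=H^\perp$, and applying $f$ gives the claim.

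The hardest part is really the compatibility of Riemann forms under pullback, which is the one substantial input and does all the work in the second step; the reverse inclusion is not argued directly but read off from the numerics, so the delicate point is to pin down the degree of $f$, the order of $K(L_A)$, and the nondegeneracy of $e^{2\Theta_C}$ carefully enough that the two cardinalities coincide exactly.
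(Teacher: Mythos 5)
Your proof is correct and follows essentially the same route as the paper: one inclusion established via the compatibility of Riemann forms under pullback (Proposition \ref{riemmform}), then upgraded to an equality by a cardinality count. The only difference is cosmetic --- you work upstairs with the preimage $f^{-1}(K(L_A)) \subseteq JC$ rather than directly in $A$, and in doing so you explicitly verify that this preimage lies in $JC_2$ (via $f^\vee \circ \lambda_{L_A} \circ f = 2\lambda_{\Theta_C}$), a point the paper's proof leaves implicit.
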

\begin{proof}
Since both groups have the same cardinality (16 elements), it is enough to prove one inclusion. Let's see that $K(L_A)$ is contained in 
\[ \frac{ \langle \alpha _1, \alpha _2 \rangle ^\perp}{\langle \alpha _1, \alpha _2 \rangle}. \]
Let $\tilde{a} \in K(L_A)$, then $\tilde{a} = f (a)$ for some $a \in JC$. Therefore:
\[ 1 = e^{L_A}(f (a), 0) = e^{L_A} (f (a), f (\alpha _i )) = e^{2 \Theta_C} (a, \alpha _i),\] where the last equality is obtained thanks to proposition \ref{riemmform}. Hence $\tilde{a} \in \langle \alpha _1, \alpha _2 \rangle ^\perp$.
\end{proof}
Let \begin{equation*}\begin{aligned}
\tilde{\mathcal{A}}_4' = \lbrace (X, L_X, H) | (X,L) \mbox{ is a ppav of dimension 4, } \\ H \subset X_2 \mbox{ and } H^{\perp} \mbox{ is an isotropic subgroup of four elements}\rbrace / \cong.
\end{aligned}\end{equation*}

We now define the new divisor $\mathcal{J}'$ using a construction analogous to the quotient one: let $(X, L_X, H) \in \tilde{\mathcal{A}}_4'$, and let us put $A'=X/H$. This gives a degree $4$ isogeny $f' : A' \longrightarrow X/X_2 \cong X$. $A'$ is polarized by $L_{A'}=f'^*(L_X)$, which is of the desired type $(1,1,2,2)$. The moduli space $\tilde{\mathcal{A}}_4'$ also gives a finite covering $\varpi'$ for $\mathcal{A}^{(1,1,2,2)}_4$. As before, the image of the Jacobian locus by $\varpi'$ defines a divisor which is in fact $\mathcal{J}'$.

Now for $A = JC/ \langle \alpha _1, \alpha _2 \rangle$, we have that $A^\vee=\rho(A)$. Recalling that by definition $\lambda _{L_A} : A \rightarrow A^{\vee}$, then by using Lemma \ref{lemma_ker} and the third isomorphism Theorem we can write down $A^{\vee}$ explicitly as a quotient of $JC$:
\begin{equation*}
A^{\vee} \cong \frac{A}{\ker (\lambda _{L_A})} \cong\frac{JC \big{/} \langle \alpha _1, \alpha _2 \rangle}{\langle \alpha _1 , \alpha _2 \rangle ^\perp \big{/} \langle \alpha _1, \alpha _2 \rangle} \cong \frac{JC}{\langle \alpha _1, \alpha _2 \rangle ^\perp}.
\end{equation*}

Moreover, $(A'=\frac{JC}{\langle \alpha _1, \alpha _2 \rangle ^\perp},L_{A'})$ is the image of $(JC, \Theta_C, \langle \alpha _1, \alpha _2 \rangle ^\perp)$ by $\varpi'$.
This duality argument leads to the fact that the two divisors $\mathcal{J}$ and $\mathcal{J}'$ are linked by the following diagram:
%%to the construction of $\mathcal{J}'$:  
\begin{center}
\begin{tikzpicture}
\matrix (m) [matrix of math nodes,row sep=3em,column sep=4em,minimum width=2em]
{
	\tilde{\mathcal{A}_4} & \tilde{\mathcal{A'}_4} \\
	\mathcal{A}^{(1,1,2,2)}_4 & \mathcal{A}^{(1,1,2,2)}_4 \\};
\path[-stealth]
(m-1-1) edge node [left] {$\varpi$} (m-2-1)
edge node [above] {$\perp$} (m-1-2)
(m-2-1.east|-m-2-2) edge node  [above] {$\rho$} (m-2-2)
(m-1-2) edge node [right] {$\varpi'$} (m-2-2)
;
\end{tikzpicture}
\end{center}

The map $\perp$ takes the triple $(X,L_X,H)$ to $(X^\vee,L_X^\vee,H^\perp)$; the maps $\varpi$, $\varpi'$ are the two finite coverings of $\mathcal{A}_4^{(1,1,2,2)}$ defined above, and the map $\rho$ is the Birkenhake and Lange involution.
%takes the triple $(X,L_X,H)$ to the polarized abelian variety $(A,L_A)$ via the (first) quotient construction; the map $g$ takes the triple $(X^\vee,L_X^\vee,H^\perp)$ to the polarized abelian variety $(A',L_{A'})$ via the (second) quotient construction. takes $(A,L_A)$ to its dual $(A^\vee,L_{A^\vee})$. 
 We see that the diagram commutes thanks to the following Lemma:

\begin{lemma}\label{lem2}
The pullback by $f ^\vee : A^\vee \longrightarrow JC^\vee$ of $\Theta^\vee_C$ is algebraically equivalent to $L_{A^\vee}$.
\end{lemma}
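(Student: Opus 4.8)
The plan is to translate the statement into an equality of polarization isogenies. Two line bundles on an abelian variety are algebraically equivalent precisely when they induce the same homomorphism to the dual; thus it suffices to prove $\lambda_{(f^\vee)^*\Theta_C^\vee} = \lambda_{L_{A^\vee}}$ as isogenies $A^\vee \to (A^\vee)^\vee = A$. I will compute both sides explicitly, the first from the functoriality of polarizations under pullback and the second from Birkenhake--Lange's definition of the dual polarization.

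First I would compute the left-hand side. Recall that for a homomorphism $h\colon Y \to Z$ and a line bundle $M$ on $Z$ one has $\lambda_{h^*M}=h^\vee \comp \lambda_M \comp h$. Applying this with $h=f^\vee\colon A^\vee \to JC^\vee$ and $M=\Theta_C^\vee$, and using the canonical identification $(f^\vee)^\vee=f$ coming from biduality, gives $\lambda_{(f^\vee)^*\Theta_C^\vee}=f\comp\lambda_{\Theta_C^\vee}\comp f^\vee$. Since $\Theta_C$ is a principal polarization, $\lambda_X:=\lambda_{\Theta_C}\colon JC \to JC^\vee$ is an isomorphism and its dual polarization (exponent $1$) satisfies $\lambda_{\Theta_C^\vee}=\lambda_X^{-1}$; hence $\lambda_{(f^\vee)^*\Theta_C^\vee}=f\comp\lambda_X^{-1}\comp f^\vee$.

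Next I would identify the right-hand side. The Birkenhake--Lange dual polarization $L_{A^\vee}$ of a polarization $L_A$ of exponent $d_g$ (here $d_g=2$, the largest elementary divisor of the type $(1,1,2,2)$) is the unique polarization whose isogeny $\lambda_{L_{A^\vee}}\colon A^\vee \to A$ satisfies $\lambda_{L_{A^\vee}}\comp\lambda_{L_A}=2\,\id_A$; one checks this is integral because $\ker\lambda_{L_A}\subset A_2$, and that it produces again type $(1,1,2,2)$, consistently with $\rho$ being an involution. It therefore remains to show $f\comp\lambda_X^{-1}\comp f^\vee=2\lambda_{L_A}^{-1}$, i.e. that this left-hand isogeny composed with $\lambda_{L_A}$ equals $2\,\id_A$. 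This is where the commutative square of the quotient construction enters: it gives $f^\vee\comp\lambda_{L_A}\comp f=2\lambda_X$. Left-multiplying by $\lambda_X^{-1}$ yields $\lambda_X^{-1}\comp f^\vee\comp\lambda_{L_A}\comp f=2\,\id_{JC}$; then left-multiplying by $f$ and cancelling the epimorphism $f$ on the right gives $f\comp\lambda_X^{-1}\comp f^\vee\comp\lambda_{L_A}=2\,\id_A$, as desired.

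I expect the only delicate point to be bookkeeping in the isogeny category: the maps $\lambda_{L_A}^{-1}$ and $\lambda_X^{-1}$ exist a priori only as $\mathbb{Q}$-isogenies, so I must justify that the relevant composites ($\lambda_X^{-1}$ genuinely, and $2\lambda_{L_A}^{-1}$ after clearing the denominator $2$) are honest morphisms of abelian varieties, and that right-cancellation of the surjection $f$ is legitimate. Getting the normalization factor $d_g=2$ in the dual polarization correct --- rather than $1$ or $4$ --- is the other place where care is needed, and it is pinned down exactly by the requirement that $\rho$ preserve the type $(1,1,2,2)$ and square to the identity. Once these are in place, equality of the two isogenies yields the claimed algebraic equivalence, and feeding it back into the diagram shows $\varpi'\comp\perp=\rho\comp\varpi$, i.e. that the square commutes.
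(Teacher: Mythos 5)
Your proof is correct and takes essentially the same route as the paper's: both reduce the lemma to the identity $f\circ\lambda_{\Theta}^{-1}\circ f^\vee=\lambda_{L_{A^\vee}}$ of polarization isogenies, both combine the quotient-construction relation $f^\vee\circ\lambda_{L_A}\circ f=2\lambda_{\Theta}$ with the dual-polarization relation $\lambda_{L_{A^\vee}}\circ\lambda_{L_A}=2\,\id_A$, and both finish by cancelling an epimorphism on the right (the paper cancels $\lambda_{L_A}\circ f$ in one step, while you cancel $f$ and then invoke uniqueness of the isogeny $\psi$ with $\psi\circ\lambda_{L_A}=2\,\id_A$). Your added care about integrality of $2\lambda_{L_A}^{-1}$ and the normalization by the exponent $d_g=2$ makes explicit points the paper leaves implicit, but the substance is identical.
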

\begin{proof}
The statement is equivalent to $f \lambda _{\Theta}^{-1} f ^{\vee} = \lambda _{L_A^\vee}$. Let us first show that $f \lambda _{\Theta}^{-1}  f ^{\vee} \lambda_{L_A} f$ is equal to $\lambda _{L_A^\vee}  \lambda_{L_A} f=2f$. 
We know that $f ^{\vee} \lambda_{L_A}  f=2\lambda_{\Theta}$, therefore $f \lambda _{\Theta}^{-1} f ^{\vee} \lambda_{L_A} f= f \lambda _{\Theta}^{-1} 2\lambda_\Theta=2f$. 

To conclude that $f \lambda _{\Theta}^{-1} f ^{\vee} = \lambda _{L_A^\vee}$, we use the fact that $ \lambda_{L_A} f$ is an epimorphism (since it is surjective as an isogeny).  

\begin{comment}
It is then enough to prove that the following diagram commutes:
\[\xymatrixcolsep{5pc}\xymatrixrowsep{3pc}
\xymatrix{
JC^\vee \ar[d]_{\lambda _{\Theta}^{-1}} & A^\vee \ar[d]^{\lambda_{L_A^\vee}} \ar[l]_{f ^\vee} \\
JC \ar[r]^f & A}
\]
	Consider then the diagram:
	\[\xymatrixcolsep{5pc}\xymatrixrowsep{3pc}
	\xymatrix{
	JC \ar[r]^f \ar[d]_{\lambda _{2\Theta}} & A \ar[d]^{\lambda_{L_A}} \\
	JC^\vee \ar[d]_{\lambda_{\Theta}^{-1}} & A^\vee \ar[l]^{f^\vee} \ar[d]^{\lambda_{L_A^\vee}} \\
	JC & A}
	\]
	We observe that $\lambda _\Theta ^{-1} \circ \lambda _{2 \Theta} = 2_{JC}$, and also $\lambda _{L_A} \circ \lambda_{L_A^\vee} = 2_A$. So, if we take an element $x \in JC$, we have:
	\[ \lambda _\Theta ^{-1} \circ \lambda _{2 \Theta}(x) = 2x, \qquad \qquad \lambda _{L_A} \circ \lambda_{L_A^\vee} \circ f = 2 f (x).\]
	
	By linearity, we can complete the diagram with $f : JC \rightarrow A$.
	%
	%\[\xymatrixcolsep{5pc}\xymatrixrowsep{3pc}
	%\xymatrix{
	%JC \ar[r]^f \ar[d]_{\lambda _{2\Theta}} & A \ar[d]^{\lambda_{L_A}} \\
	%JC^\vee \ar[d]_{\lambda_{\Theta}^{-1}} & A^\vee \ar[l]^{f^\vee} \ar[d]^{\lambda_{L_A^\vee}} \\
	%JC \ar[r]_{f} & A}
	%\]
	
\end{comment}
\end{proof}

A generic element of $\mathcal{J'}$ is a pair $(A'=JC/\langle \alpha _1, \alpha _2 \rangle ^\perp,L_{A'})$ together with a degree $4$ isogeny $f' : A' \longrightarrow JC $ such that $L_{A'}=f'^*(\Theta_C)$. The commutativity of the above diagram means that $\rho(\mathcal{J})=\mathcal{J}'$: indeed, given $(A=JC/\langle \alpha _1, \alpha _2 \rangle ,L_{A}) \in \mathcal{J}$, we have $\rho(A)=A^\vee = JC/\langle \alpha _1, \alpha _2 \rangle ^\perp=A'$. To see that $\rho(L_A)=L_{A'}$, we observe that $f'=f^\vee$ and use Lemma \ref{lem2}. 
%We use it to prove point $(2)$ of Theorem (\ref{main_thm}).

It is useful to note that we have the following result :

\begin{lemma}
The pullback by $\lambda _{L_A}$ of $L_{A^\vee}$ is algebraically equivalent to $L_A ^2$.
\end{lemma}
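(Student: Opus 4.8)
The plan is to deduce the statement from the functoriality of the assignment $L \mapsto \lambda_L$ together with the defining relation of the dual polarization. Recall that for a homomorphism $g : A \to B$ of abelian varieties and a line bundle $M \in \Pic(B)$ one has $\lambda_{g^*M} = \widehat{g}\circ \lambda_M \circ g$, where $\widehat{g}$ is the dual homomorphism, and that two line bundles on $A$ are algebraically equivalent exactly when they induce the same homomorphism to $A^\vee$ (their difference lying in $\Pic^0(A)$). I would apply the first identity with $g = \lambda_{L_A} : A \to A^\vee$ and $M = L_{A^\vee}$, obtaining
\[
\lambda_{\lambda_{L_A}^*L_{A^\vee}} = \widehat{\lambda_{L_A}}\circ \lambda_{L_{A^\vee}} \circ \lambda_{L_A}.
\]
Since the isogeny attached to a polarization is symmetric, under the canonical identification $A^{\vee\vee}=A$ we have $\widehat{\lambda_{L_A}} = \lambda_{L_A}$, so the right-hand side becomes $\lambda_{L_A}\circ\big(\lambda_{L_{A^\vee}}\circ\lambda_{L_A}\big)$.

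The key input is then the identity $\lambda_{L_{A^\vee}}\circ\lambda_{L_A} = 2_A$, i.e. multiplication by $2$ on $A$; this is exactly the defining property of the Birkenhake--Lange dual polarization in type $(1,1,2,2)$ (the exponent of $K(L_A)$ being $2$), and it is the same relation already used in the proof of Lemma \ref{lem2}. If one prefers to verify it within the present framework, one precomposes with the surjection $f : JC \to A$ and uses Lemma \ref{lem2}, which gives $\lambda_{L_{A^\vee}} = f\circ\lambda_\Theta^{-1}\circ f^\vee$, together with the relation $f^\vee\circ\lambda_{L_A}\circ f = 2\lambda_\Theta$ coming from $f^*L_A = \mathcal{O}_{JC}(2\Theta_C)$; this yields $\lambda_{L_{A^\vee}}\circ\lambda_{L_A}\circ f = f\circ\lambda_\Theta^{-1}\circ 2\lambda_\Theta = 2f = 2_A\circ f$, and cancelling the epimorphism $f$ gives $\lambda_{L_{A^\vee}}\circ\lambda_{L_A} = 2_A$.

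Substituting this back, I get
\[
\lambda_{\lambda_{L_A}^*L_{A^\vee}} = \lambda_{L_A}\circ 2_A = 2\,\lambda_{L_A} = \lambda_{L_A^{\otimes 2}},
\]
where the last equality uses $\lambda_{L^{\otimes n}} = n\,\lambda_L$. As equality of the associated homomorphisms to the dual is equivalent to algebraic equivalence of the line bundles, this gives $\lambda_{L_A}^*L_{A^\vee} \equiv L_A^2$, as claimed.

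I expect the only delicate point to be the bookkeeping: keeping track of the sources and targets of the various isogenies, the identification $A^{\vee\vee}=A$, and the symmetry $\widehat{\lambda_{L_A}}=\lambda_{L_A}$, so that the middle composition is read as $\lambda_{L_{A^\vee}}\circ\lambda_{L_A}$ rather than in the other order. Once the relation $\lambda_{L_{A^\vee}}\circ\lambda_{L_A}=2_A$ is in hand, everything else is a formal manipulation.
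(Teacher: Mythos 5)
Your proof is correct and takes essentially the same route as the paper's: both reduce the claim, via $\lambda_{g^*M}=\widehat{g}\circ\lambda_M\circ g$ and the fact that algebraic equivalence amounts to equality of the associated isogenies, to the identity $(\lambda_{L_A})^{\vee}\circ\lambda_{L_{A^\vee}}\circ\lambda_{L_A}=2\lambda_{L_A}$, and then conclude using the symmetry $(\lambda_{L_A})^{\vee}=\lambda_{L_A}$ together with the defining relation of the dual polarization ($\lambda_{L_{A^\vee}}\circ\lambda_{L_A}=2_A$, equivalently $\lambda_{L_A}\circ\lambda_{L_{A^\vee}}=2_{A^\vee}$). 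The only difference is expository: you make the functoriality step explicit and offer an optional re-derivation of the key relation from Lemma \ref{lem2} (which is redundant, since the paper's proof of that lemma already assumes this relation), whereas the paper simply quotes it.
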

\begin{proof}
The proof is analogous to the previous one: the statement is equivalent to 
\[(\lambda _{L_A})^{\vee} \circ \lambda _{L_A ^\vee } \circ \lambda _{L_A} = 2 \lambda _{L_A}.\]
But since $(\lambda _{L_A})^{\vee}: A \rightarrow A^\vee$ is the same as $\lambda _{L_A}: A \rightarrow A^\vee$, and $\lambda _{L_A} \circ \lambda _{L_A ^\vee } = 2_A$ the equality is straightforward:
\[(\lambda _{L_A})^{\vee} \circ \lambda _{L_A ^\vee } \circ \lambda _{L_A} = \lambda _{L_A} \circ \lambda _{L_A ^\vee } \circ \lambda _{L_A} = 2 \lambda _{L_A}.\]
\end{proof}

% , which is the  $(JC,)$ is some  quotients of Jacobians by the orthogonal of a totally isotropic $4$-element subgroup of the $2$-torsion group. 

\begin{proof}[Proof of Theorem (\ref{main_thm}) point (2)]
Suppose $\mathcal{J} = \mathcal{J}'$. Since elements in $\mathcal{J}$ are of the form $\frac{JC}{\langle \alpha _1, \alpha _2 \rangle}$, for some curve $C$ and some $2$-torsion elements $\alpha _1$ and $\alpha _2$, and elements in $J'$ are of the form $JD/\langle \beta _1, \beta _2 \rangle ^{\perp}$ for some curve $D$ and some $2$-torsion elements $\beta _1$ and $\beta _2$, the equality would imply that for every pair $(JC, \langle \alpha _1, \alpha _2 \rangle)$ in $\mathcal{J}$ we can find another pair $(JD, \langle \beta _1, \beta _2 \rangle)$ such that 
\[\frac{JC}{\langle \alpha _1, \alpha _2 \rangle} = \frac{JD}{\langle \beta _1, \beta _2 \rangle ^{\perp}}.\]

Now consider the diagram
\[\xymatrixcolsep{5pc}\xymatrixrowsep{3pc}
\xymatrix{
JC \ar[d]^{2_{JC}} \ar[r]^-{f_C} &\frac{JC}{\langle \alpha _1, \alpha _2 \rangle }= \frac{JD}{\langle \beta _1, \beta _2 \rangle ^{\perp}} \ar[d]^{\lambda_{L_A}} & JD \ar[l]_-{f_D} \ar[d]^{2_{JD}} \\
JC^{\vee} &\frac{JC^{\vee}}{\langle \alpha _1, \alpha _2 \rangle ^{\perp}}= \frac{JD^{\vee}}{\langle \beta _1, \beta _2 \rangle} \ar[l]_-{f _C ^{\vee}}\ar[r]_-{f _D ^{\vee}} & JD^{\vee}}
\]

Composing $f _C ^{\vee} \circ \lambda _{L_A} \circ f _D$, we obtain an isogeny from $JD$ to $JC^{\vee} \cong JC$, where the isomorphism is given using the principal polarization of $JC$. Computing the degree of this map we find that it is $2^{12}$, that is the product of the degree of the three factorizing maps ($\operatorname{deg} f _D = 2^6$, $\operatorname{deg} \lambda _{L_A} = 2^4$, $\operatorname{deg} f _C ^{\vee} = 2^2$).
Now we use this result:
\begin{teo}[Bardelli, Pirola]\label{BardelliPirola}
If $\chi$ is an isogeny between two Jacobians of dimension $g \geq 4$, and $J$ is generic, than $J \cong J'$ and $\chi$ is the multiplication by an integer.
\end{teo}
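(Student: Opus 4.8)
The plan is to reduce the statement to a rigidity question about a single primitive isogeny and then to isolate the geometric input that genuinely needs the genericity of $J=JC$. First I would exploit genericity through endomorphisms. For a generic curve $C$ of genus $g$ one has $\End(JC)=\mathbb{Z}$ (equivalently $\End^0(JC)=\mathbb{Q}$): the monodromy of the universal family is Zariski dense in $\Sp(2g)$, so no non-scalar endomorphism can survive, and in particular $JC$ is simple. Since $\chi\colon JC\to JC'$ is an isogeny, $JC'$ is isogenous to $JC$, hence also simple with $\End^0(JC')\cong\mathbb{Q}$, and $\Hom^0(JC,JC'):=\Hom(JC,JC')\otimes\mathbb{Q}$ is a one-dimensional $\mathbb{Q}$-vector space. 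Consequently $\Hom(JC,JC')$ is a rank-one lattice; fix a primitive generator $\mu\colon JC\to JC'$. Every isogeny, in particular $\chi$, is then an integer multiple of $\mu$, i.e. $\chi=\mu\circ[k]$ for some $k\in\mathbb{Z}$. Thus the whole theorem is reduced to the single claim that $\mu$ is an isomorphism: granting this, $\mu$ identifies $JC$ with $JC'$, so $JC\cong JC'$, and under this identification $\chi$ becomes $[k]$, i.e. multiplication by an integer.

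Second I would translate ``isomorphism'' into a statement about polarizations. As in Lemma \ref{lem2}, the composite $\lambda_\Theta^{-1}\circ\mu^\vee\circ\lambda_{\Theta'}\circ\mu$ is an endomorphism of $JC$, hence equals $M\cdot\id$ for some $M\in\mathbb{Z}$ by $\End(JC)=\mathbb{Z}$; since it equals $\lambda_\Theta^{-1}\circ\lambda_{\mu^*\Theta'}$ and both $\Theta$ and $\mu^*\Theta'$ are polarizations, positivity forces $M\geq 1$ and $\mu^*\Theta'\equiv M\Theta$ in $\operatorname{NS}(JC)$. Comparing degrees (the dual isogeny $\hat\mu:=\lambda_\Theta^{-1}\mu^\vee\lambda_{\Theta'}$ satisfies $\hat\mu\,\mu=M\cdot\id$, and $\deg\hat\mu=\deg\mu$) gives $\deg\mu=M^{g}$. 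Hence $\mu$ is an isomorphism if and only if $M=1$, and the theorem is now equivalent to proving $M=1$ -- that is, a generic Jacobian admits no non-trivial (non-isomorphism) isogeny to another Jacobian.

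Third comes the geometric heart, which I expect to be the main obstacle. Composing the Abel--Jacobi embedding $C\hookrightarrow JC$ with $\mu$ produces an irreducible curve $C_1=\mu(C)\subset JC'$. Using genericity of $C$ (so that no two points of $C$ differ by a non-zero element of $\ker\mu$), the map $C\to C_1$ is birational, so $C_1$ has geometric genus $g$; pushing forward the Poincar\'e minimal class and using $\mu^*\Theta'\equiv M\Theta$ shows $[C_1]=M\cdot\frac{(\Theta')^{g-1}}{(g-1)!}$. The claim $M=1$ is precisely the assertion that $C_1$ realizes the minimal class, and here I would invoke the Matsusaka--Ran characterization of Jacobians (an irreducible curve of minimal class in a principally polarized abelian variety exhibits it as a Jacobian with that curve as Abel--Jacobi curve) together with a deformation and Hodge-theoretic argument in the spirit of Pirola: as $C$ varies over a Zariski-dense locus of $\mathcal{M}_g$, a surviving family of isogenies $\mu$ with $M\geq 2$ would force the genus-$g$ curves $C_1$ of class $M$ times the minimal one to vary compatibly inside a varying Jacobian, which the infinitesimal Torelli theorem and the rigidity of minimal classes for $g\geq 4$ rule out. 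Excluding $M\geq 2$ -- equivalently, showing the transported curve cannot carry a strictly larger multiple of the minimal class -- is the crux where $g\geq 4$ and the genericity of $J$ are indispensable; once $M=1$ is established, $\mu$ is an isomorphism of principally polarized abelian varieties, Torelli gives $C\cong C'$ and $JC\cong JC'$, and the reduction of the first paragraph finishes the proof.
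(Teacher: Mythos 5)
You should first be aware that the paper does not prove this statement at all: it is imported verbatim as a black box from Bardelli and Pirola \cite{bardelli1989curves}, and is used in the paper only to derive the degree contradiction $m^{8}\neq 2^{12}$. So there is no internal proof to compare yours against, and your proposal must be judged as a standalone proof of the cited theorem. On that standard, your first two steps are correct and standard: Zariski density of the monodromy in $\Sp(2g,\Z)$ gives $\End(JC)=\Z$ for generic $C$, hence $JC$ is simple, $\Hom(JC,JC')$ is a rank-one lattice with primitive generator $\mu$, every isogeny is $k\mu$, and the Rosati-type computation $\lambda_{\Theta}^{-1}\mu^{\vee}\lambda_{\Theta'}\mu=M\cdot\id$ with $\mu^{*}\Theta'\equiv M\Theta$ and $\deg\mu=M^{g}$ correctly reduces the theorem to the single claim $M=1$.

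The genuine gap is your third step, which does not prove $M=1$ but restates it. Matsusaka--Ran characterizes irreducible curves realizing the \emph{minimal} class, i.e., it applies exactly in the case $M=1$ that you are trying to reach; it has no power to exclude $M\geq 2$. And the sentence invoking ``a deformation and Hodge-theoretic argument in the spirit of Pirola'' that the configuration is ``ruled out'' is precisely the content of the Bardelli--Pirola theorem itself: their paper \cite{bardelli1989curves} is devoted to a substantial degeneration analysis of curves of geometric genus $g$ lying on $g$-dimensional Jacobians, and that analysis is where $g\geq 4$ and genericity actually do work; naming the expected tools is not a proof. Two subsidiary points would also need repair even within your outline. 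First, birationality of $C\to C_{1}=\mu(C)$ is not automatic: it fails exactly when some nonzero $k\in\ker\mu$ lies in the difference surface $C-C\subset JC$, and since $\mu$ (hence $\ker\mu$) varies with $C$, you cannot dispose of this by genericity of $C$ alone; if the map has degree $d>1$ onto its image, then $\mu_{*}[C]=d[C_{1}]$ and the geometric genus of $C_{1}$ drops, so the class bookkeeping $[C_{1}]=M\cdot\frac{(\Theta')^{g-1}}{(g-1)!}$ must be redone in that case. Second, $JC'$ is only \emph{isogenous} to a generic Jacobian, not itself generic, so any rigidity or Hodge-class statement you invoke on $JC'$ must be established for this larger class. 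In short: your reduction to $M=1$ is sound and consistent with the strategy of the cited source, but the crux of the theorem is asserted rather than established.
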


Applying Theorem \ref{BardelliPirola} to our case, we get that necessarily the Jacobians $JC$ and $JD$ have to be isomorphic as principally polarized abelian varieties, so, using Torelli Theorem, we get that the curves $C$ and $D$ have to be isomorphic. Moreover, we also get that the isogeny $\chi = f _C ^{\vee} \circ \lambda _{L_A} \circ f _D$ has to be the multiplication by an integer map. But this cannot be, since the multiplication by $m$ map has degree $m^{2 \times 4}=m^8$, thus can never be equal to $2^{12}$. Hence $\mathcal{J} \neq \mathcal{J}'$, which completes the proof.
 %$2$ has degree $2^8$, lower then $\operatorname{deg}(\chi)$, multiplication by $3$ has degree $3^8$, different from $\operatorname{deg}(\chi)$, and multiplication by $4$ has degree $4^8$ greater than $\operatorname{deg}(\chi)$ (and also, every greater integer will give a greater degree).

\end{proof}

\bibliographystyle{plain}
\bibliography{biblio}

\begin{thebibliography}{10}

\bibitem{bardelli1995curves}
Fabio Bardelli, Ciro Ciliberto, and Alessandro Verra.
\newblock Curves of minimal genus on a general abelian variety.
\newblock {\em Compositio Mathematica}, 96(2):115--147, 1995.

\bibitem{bardelli1989curves}
Fabio Bardelli and Gian~Pietro Pirola.
\newblock Curves of genus $g$ lying on a $g$-dimensional {J}acobian variety.
\newblock {\em Inventiones mathematicae}, 95(2):263--276, 1989.

\bibitem{birkenhake2003isomorphism}
Christina Birkenhake and Herbert Lange.
\newblock An isomorphism between moduli spaces of abelian varieties.
\newblock {\em Mathematische Nachrichten}, 253(1):3--7, 2003.

\bibitem{birkenhake2013complex}
Christina Birkenhake and Herbert Lange.
\newblock {\em Complex abelian varieties}, volume 302.
\newblock Springer Science \& Business Media, 2013.

\bibitem{clemens1983double}
Charles~Herbert Clemens.
\newblock Double solids.
\newblock {\em Advances in mathematics}, 47(2):107--230, 1983.

\bibitem{debarre2005complex}
Olivier Debarre.
\newblock {\em Complex tori and abelian varieties}, volume~11.
\newblock American Mathematical Society, 2005.

\bibitem{donagi1984unirationality}
Ron Donagi.
\newblock The unirationality of $\mathcal{A}_5$.
\newblock {\em Annals of Mathematics}, 119:269--307, 1984.

\bibitem{donagi1992fibers}
Ron Donagi.
\newblock {The fibers of the Prym map. Curves, Jacobians, and abelian
  varieties}.
\newblock {\em Contemporary Mathematics}, 136:55--125, 1990.

\bibitem{hulek1994surfaces}
K.~Hulek and G.~K. Sankaran.
\newblock The kodaira dimension of certain moduli spaces of abelian surfaces.
\newblock {\em Compositio Mathematica}, 90(1):1--35, 1994.

\bibitem{mumford1983kodaira}
David Mumford.
\newblock On the {K}odaira dimension of the {S}iegel modular variety.
\newblock In {\em Algebraic geometry, open problems}, pages 348--375. Springer,
  1983.

\bibitem{naranjo2016degree}
Juan~Carlos Naranjo and Angela Ortega.
\newblock The degree of the {P}rym map of ramified coverings, remaning cases.
\newblock {\em in preparation}, 2016.

\bibitem{pantazis1986prym}
Stefanos Pantazis.
\newblock Prym varieties and the geodesic flow on ${SO}(n)$.
\newblock {\em Mathematische Annalen}, 273(2):297--315, 1986.

\bibitem{tai1982kodaira}
Yung-Sheng Tai.
\newblock On the {K}odaira dimension of the moduli space of abelian varieties.
\newblock {\em Inventiones mathematicae}, 68(3):425--439, 1982.

\end{thebibliography}

\end{document}